\newtheorem{theorem}{\bf Theorem}[section]
\newtheorem{lemma}[theorem]{\bf Lemma}
\newtheorem{condition}[theorem]{\bf Condition}
\newtheorem{corollary}[theorem]{\bf Corollary}
\newtheorem{definition}{\bf Definition}[section]
\newtheorem{remark}[definition]{\bf Remark}
\newtheorem{example}[definition]{\bf Example}
\newcommand{\mmt}[1]{\mathcal{#1}}
\newcommand{\mmts}[1]{\mathcal{#1}}
\newcommand{\mcover}[1]{\mathcal{#1}}
\newcommand{\mscen}[2]{\de{\mmts{#1},\mcover{#2}}}
\newcommand{\EM}[1]{\mathscr{#1}}
\newcommand{\R}{\mathbb{R}}
\newcommand{\de}[1]{\left(#1\right)}
\newcommand{\De}[1]{\left[#1\right]}
\newcommand{\DE}[1]{\left\{#1\right\}}
\newcommand{\abs}[1]{\left| #1\right|}
\newcommand{\eg}{\emph{e.g.}: }
\newcommand{\ie}{\emph{i.e.}: }
\newcommand{\OmSig}{\de{\Omega,\Sigma}}
\newcommand{\OmSigMu}{\de{\Omega,\Sigma,\mu}}
\newcommand{\Power}[1]{\mathcal{P}\de{#1}}
\newcommand{\sa}{$\sigma$-algebra }
\newcommand{\measf}[1]{\mathsf{#1}}
\begin{document}

\title{On Measures and Measurements: a Fibre Bundle approach to Contextuality}

\author{
Marcelo {Terra Cunha}$^{1}$}

\address{$^{1}$Departamento de Matem\'atica Aplicada, Instituto de Matem\'atica, Estat\'istica e Computa\c c\~ao Cient\'ifica, Universidade Estadual de Campinas, 13084-970, Campinas, S\~ao Paulo, Brazil}

\subject{Probability Theory, Topology, Mathematical Foundations to Quantum Theory}

\keywords{Measurements, Measures, Contextuality, Fibre Bundles}

\corres{Marcelo Terra Cunha\\
\email{tcunha@unicamp.br}}

\begin{abstract}
Contextuality is the failure of ``local'' probabilistic models to become global ones.
In this paper we introduce the notions of \emph{measurable fibre bundles},  \emph{probability fibre bundles}, and \emph{sample fibre bundle} which capture and make precise the former statement.
The central notions of contextuality are discussed under this formalism, examples worked out, and some new aspects pointed out.  
\end{abstract}


\begin{fmtext}
\section{Outlook and Motivations}\label{sec:Intro}
Many Escher images, like \emph{Waterfall} or \emph{Ascending and Descending}\cite{Escher} illustrate vividly the problem of context\-uality.
Each piece of the image is totally consistent, the pieces even connect smoothly, however, when we try to glue them all together some ``obstruction'' appears.
Mathematically, the notion of gluing pieces together is also present.
Probably, the best known example are \emph{differential surfaces}, where each piece is identified with an open set of the Euclidean plane $\R^2$, but when the pieces are glued together, they can make very different objects like a sphere, a torus or even some objects which do not fit in  $\R^3$, like the Klein Bottle\cite{Topology}.
Another example are vector bundles.
If one uses a differentiable manifold (the $d$-dimensional analogue of diferentiable surfaces), $M$, as the basis space and attach to each point $p\in M$ a vector space, $V_p$, with some rule about how to connect the vectors of neighbour points, one gets another differentiable manifold, $E$, with points given by pairs $\de{p,v}$, with $p \in M$ and $v \in V_p$.
Well known to many physicists and essential for differential geometers  is the example of the tangent bundle, $TM$, when $V_p = T_pM$, the tangent space of $M$ on $p$, where Lagrangian mechanics is developed\cite{Arnold}.  
\end{fmtext}


\maketitle

Probability theory, however, is developed over the concept of a single $\sigma$-algebra, where all the relevant events are defined.
This is very natural, classically, where given two events $A$ and $B$, it also makes sense to consider the event $A \cap B$, logically connected to the conjunction of the conditions for $A$ and $B$.
It inherently brings the notion of a ``global order'' and that images like the ones from Escher are really impossible in our world.
The notion of contextuality brings another status to those ``impossible figures''\cite{Penrose}.
Moreover, the notion of quantum contextuality\cite{KS} says that those impossible figures somehow appear in Nature.

In this contribution we develop the notions of \emph{measurable bundle} and \emph{probability bundle} as ``Escherian'' generalisations of the basic notions of measurable space and probability space, where Kolmogorovian probability theory is developed\cite{TaoProb}.
In other words, we set the basis for the development of a probability theory where topology plays a crucial \textit{r\^ole}.

Despite being inspired by quantum theory, the notions developed here are totally independent of it.
The examples worked out here are the most simple, yet surprising ones, and do not demand any knowledge of quantum theory.
After reviewing (in sec.~\ref{sec:ClassProb}) the basic notions of probability theory which we are generalising, we introduce the central notions of contextuality and the central objects of our contribution in sec.~\ref{sec:Defs}.
The fibre bundle approach to contextuality is developed in sec.~\ref{sec:FBapproach}, with examples and the translations of Fine-Abramsky-Brandeburger theorem for characterisation of contextuality and Budroni-Morchio results on scenarios with no contextuality to this language.
The notion of contextuality subscenarios guide the sec.~\ref{sec:Subscenarios}, emphasising even more the connection of contextuality and topology.
Connections to other approaches are discussed in sec.~\ref{sec:Others}, while some more speculative points are raised in sec.~\ref{sec:Disc} before the closing in sec.~\ref{sec:Conc}.



\section{Classical Probability Theory in a nutshell}\label{sec:ClassProb}

\subsection{The basic notions}\label{sub:ProbNotions}
A famous dictum  attributed to M. Kac say that probability theory is measure theory with a soul.
This section is to fix concepts and notations for sample spaces, measurable spaces and probability spaces.
For a deeper introduction, we recommend ref.~\cite{TaoProb}, among many other textbooks.
The central notion is
\begin{definition}\label{def:SA}
 Given a set $S$, a \emph{\sa }over $S$, denoted $\Sigma$, is a collection of subsets of $S$ with $\emptyset \in \Sigma$, closed under complementation and countable unions. 
\end{definition}
\begin{remark}\label{rmk:Boole}
 The examples to be raised will use finite sets $S$.
 However, some of the questions and the central target of generalising usual probability theory demand the more general definition. 
\end{remark}
\begin{definition}\label{def:MeasurableSpace}
 A \emph{measurable space} is a pair $\OmSig$, where $\Omega$ is a set and $\Sigma$ a \sa on it.
 The set $\Omega$ is called the \emph{sample space}.
\end{definition}
The next step is to measure the sets in $\Sigma$.
This demands the notion of a \emph{measure}.
We will jump straight to the definition of \emph{probability measure}:
\begin{definition}\label{def:Prob}
 A \emph{probability measure}, or simply a \emph{probability}, on a measurable space $\OmSig$ is a function $\mu: \Sigma \longrightarrow \De{0,1}$ such that
  \begin{itemize}
   \item $\mu\de{\emptyset} = 0$;
   \item If $A_i$ are pairwise disjoint sets in $\Sigma$, then\footnote{Again, this property is demanded on all countable unions and countable sums.} $\displaystyle{\mu\de{\bigcup _i A_i} = \sum_i \mu\de{A_i}}$;
   \item $\mu\de{\Omega} = 1$.
  \end{itemize}
 The triple $\de{\Omega,\Sigma,\mu}$ is called a \emph{probability space}.
\end{definition}

Another important notion in what follows is that if we fix a sample space $\Omega$, the $\sigma$-algebras on it obey a partial order coming from inclusion:
\begin{definition}\label{def:CG}
 Given two $\sigma$-algebras, $\Sigma$ and $\Sigma'$, over the same set we say that $\Sigma$ is a \emph{coarse graining} of $\Sigma'$, or that $\Sigma'$ \emph{refines} $\Sigma$ if $\Sigma \subseteq \Sigma'$.
\end{definition}
Intuitively, this means that any atom of $\Sigma$ can be obtained as a (countable) union of atoms of $\Sigma'$.

\subsection{The simplest case and its geometry}\label{sub:ProbSimplex}
Once one is concerned with a finite collection of (classical) random variables, $\mmts{X} = \DE{X_i}$, each of them taking values in finite sets $S_i$, there is a minimal and sufficient (Boolean) \sa to hold such situation.   
For only one variable, taking values in the finite set $S$, the powerset, $\Power{S}$, plays this \textit{r\^ole}.
In other words, one can identify $S$ with the sample space $\Omega$ and use the measurable space $\de{S,\Power{S}}$.
For the finite set of variables, 
\begin{equation}\label{eq:classsig}
 \Sigma = \prod _{\mmts{X}} \Power{S_i}.
\end{equation} 
Here, $\prod$ is taken as the \sa product, analogous to the set-theoretical Cartesian product.
Interestingly, the simple set-theoretical identification $\prod \Power{S_i} \equiv \Power{\prod S_i}$ brings something that will have deep consequences when contextuality comes to play: classically, we could change this set of random variables for just one random variable, $X$, taking values in $\prod S_i$.
Anticipating some notions (see Definition \ref{def:JointRealisation}), this means that all (classical) random variables are jointly realisable.  

Whenever one wants to attach a probability measure to a \sa $\Power{S}$, it is necessary and sufficient to attach a non-negative value $p\de{s}$ for each $s\in S$, such that $\sum_S p\de{s} = 1$.
This brings a very basic, concrete, and geometrical notion:
\begin{definition}\label{def:ProbSimplex}
 Given a finite set, $S$, the canonical vectors of $\mathbb{R}^{\abs{S}}$ are the \emph{vertices of the probability simplex}.
 The \emph{probability simplex} is the convex hull of its vertices.
\end{definition}
Indeed, this means that the usual picture of a segment, a triangle, a tetrahedron, \ldots, generalises and that the only important feature of $S$ is its cardinality. 
Each point in the simplex works as a probabilistic model for the set of variables studied and this is a bijection.

\subsection{An Ontology for Classical Probability Theory}\label{sub:ClassOntology}
Classical probability theory, as introduced by Kolmogorov, allows for a very natural ontology.
It is, by no means, necessary, and the Occam's razor must be the reason for its (complete?) absence in textbooks.
The sample space, $\Omega$, can be considered as an ontological space, where every question has an answer, or, equivalently, every property is well defined.
Those questions or properties are given by random variables, each of them assuming well defined values for each point $\omega \in \Omega$.
Their randomness only enters in the game under the idea that $\omega$ is unknown (or, ``hidden'').
To some sense, the epistemological space is given by $\Sigma$, or by some coarsening  $\Sigma' \subset \Sigma$, such that all random variables defined in the model has measurable sets attached to each possible value.

As already stated, such ontology is unnecessary.
However, it can be considered as \emph{the} classical trait to be consistent with such ontology.
It is under this paradigm that space shuttles are sent to Mars, that GPS localises mobiles up to small uncertainties, and even classical statistical mechanics was developed\footnote{Actually, it was such a completion that Einstein, Podolsky, and Rosen were looking for in their criticism to quantum theory\cite{EPR} and this is what Bell rules out with his inequalities\cite{Bell}.}.
Contextuality appears whenever it fails.
This is usually from where adjectives like weird, counterintuitive, or paradoxical come from.  

\section{Central Definitions}\label{sec:Defs}

\subsection{Measurements}\label{sub:Measuremt}
The most primitive notion of measurement is that it gives results.
This is what we will use in order to characterise them:
\begin{definition}\label{def:Measurmt}
 A \emph{measurement}, $\mmt{M}$, is a (countable\footnote{For measurements with uncountable possible results some adaptations will be necessary, but in this contribution we want to keep things simple.}) set of labels for the possible results. 
\end{definition}

\begin{definition}\label{def:Realisation}
 Given a measurement, $\mmt{M}$, a \emph{realisation} in a measurable space $\de{\Omega,\Sigma}$ is a partition of $\Omega$ into elements of $\Sigma$ subordinated to $\mmt{M}$, \ie with elements indexed by the measurement outcomes.
\end{definition}
\begin{remark}\label{rmk:RealisationExistence}
 At this level, all (finite) measurements are realisable.
 Just consider $\Omega = \mmt{M}$, as sets, and $\Sigma = \Power{\Omega}$.
\end{remark}
\begin{remark}\label{rmk:MeasurableFunctions}
 Equivalently, one can define a realisation of $\mmt{M}$ in $\OmSig$ as a measurable function $\measf{m}: \OmSig \rightarrow \de{\mmt{M},\Power{\mmt{M}}}$.
\end{remark}
\begin{remark}\label{rmk:RealisationExclusiveness}
 Note that by this definition, whenever a measurement is done, one and only one of the outcomes appear.
 This is consistent with the notion of \emph{exclusive} outcomes.
\end{remark}
Whenever the measurable space is upgraded to a probability space, a realisation of $\mmt{M}$ defines probabilities for all its outcomes:
\begin{definition}\label{def:Measure}
 Given a measurement $\mmt{M}$ with a realisation in $\de{\Omega,\Sigma}$, if $\de{\Omega,\Sigma,\mu}$ is a probability space, then $\mu$ is a (probability) \emph{measure} for $\mmt{M}$.
\end{definition}
\begin{remark}\label{rmk:pushfwrd}
 If we go along with remark \ref{rmk:MeasurableFunctions}, we can also see that  pushing forward $\mu$ along $\measf{m}$ gives a (probability) measure on $\de{\mmt{M},\Power{\mmt{M}}}$. 
\end{remark}

Before jumping into contextuality, we need to workout the notions related to joint measurability. 
\begin{definition}\label{def:JointRealisation}
 Given two measurements, $\mmt{M}$ and $\mmt{N}$, the \emph{joint measurement} is given by $\mmt{M}\wedge\mmt{N} = \mmt{M}\times\mmt{N}$ (as sets).
 A \emph{joint realisation} in a measurable space $\OmSig$ is a partition of $\Omega$ into elements of $\Sigma$ subordinated to $\mmt{M}\wedge\mmt{N}$.
 If $\OmSigMu$ is a probability space, then $\mu$ is a \emph{joint (probability) measure} for $\mmt{M}$ and $\mmt{N}$.
\end{definition}
\begin{remark}\label{rmk:ProductMeasurement}
 If $\mmt{M}$ is realised in $\OmSig$ and $\mmt{N}$ is realised in $\de{\Omega',\Sigma'}$, then $\mmt{M}\wedge\mmt{N}$ is realised in $\OmSig\times\de{\Omega',\Sigma'}$.
 This antecipates that the central notion of contextuality does not appear when considering only two measurements.
 This very simple observation has deep consequences: contextuality can never appear on trivial topologies\cite{Budronietal}.
\end{remark} 
%
%
\begin{definition}\label{def:CG}
 Given a set $M$ and a partition $P$ of $M$ into disjoint nonempty subsets, the measurement $\mmt{P}$ associated to $P$ is a \emph{coarse graining} of the measurement $\mmt{M}$ associated to $M$.
\end{definition}
\begin{definition}\label{def:MarginalMeasurements}
 Given a joint measurement $\mmt{M}\wedge\mmt{N}$, the measurements $\mmt{M}$ and $\mmt{N}$ are called its \emph{marginal measurements}.
\end{definition}
\begin{lemma}\label{lemma:MarginalMeasurements}
 Given a joint measurement $\mmt{M}\wedge\mmt{N}$, the marginal measurements can be identified with coarse grainings of the joint measurement.
\end{lemma}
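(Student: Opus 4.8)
The plan is to unwind the definitions and exhibit the coarse-graining partitions explicitly. Recall that $\mmt{M}\wedge\mmt{N} = \mmt{M}\times\mmt{N}$ as sets, so its elements are pairs $(m,n)$ with $m\in\mmt{M}$ and $n\in\mmt{N}$. By Definition~\ref{def:CG} (the one for measurements), to show $\mmt{M}$ is a coarse graining of $\mmt{M}\wedge\mmt{N}$ it suffices to produce a partition $P_{\mmt{M}}$ of the set $\mmt{M}\times\mmt{N}$ into disjoint nonempty subsets such that the measurement $\mmt{P_{\mmt{M}}}$ associated to $P_{\mmt{M}}$ ``is'' $\mmt{M}$ — that is, such that the blocks of $P_{\mmt{M}}$ are naturally indexed by the labels of $\mmt{M}$.

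The natural choice is the partition induced by the first-coordinate projection $\pi_{\mmt{M}}\colon \mmt{M}\times\mmt{N}\to\mmt{M}$, whose blocks are the fibres $\pi_{\mmt{M}}^{-1}(m) = \DE{m}\times\mmt{N}$ for $m\in\mmt{M}$. First I would check these blocks are nonempty (each is a copy of $\mmt{N}$, which is nonempty as a measurement), pairwise disjoint, and cover $\mmt{M}\times\mmt{N}$ — all immediate. Then I would observe that the assignment $m\mapsto \DE{m}\times\mmt{N}$ is a bijection between the label set $\mmt{M}$ and the blocks of $P_{\mmt{M}}$, so the measurement $\mmt{P_{\mmt{M}}}$ associated to this partition is canonically identified with $\mmt{M}$. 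By Definition~\ref{def:MarginalMeasurements} this is exactly the marginal $\mmt{M}$, establishing the claim for one marginal; the argument for $\mmt{N}$ is identical using the second-coordinate projection $\pi_{\mmt{N}}$ with blocks $\mmt{M}\times\DE{n}$.

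I would also spell out the compatibility with realisations, since this is what makes the identification useful downstream: if $\mmt{M}\wedge\mmt{N}$ has a joint realisation in $\OmSig$, say a partition $\DE{A_{(m,n)}}_{(m,n)\in\mmt{M}\times\mmt{N}}$ of $\Omega$ into elements of $\Sigma$, then setting $A_m := \bigcup_{n\in\mmt{N}} A_{(m,n)}$ gives a partition of $\Omega$ into $\Sigma$-elements subordinated to $\mmt{M}$ (countability of $\mmt{N}$ and closure of $\Sigma$ under countable unions guarantee $A_m\in\Sigma$), hence a realisation of the marginal $\mmt{M}$; and this is exactly the coarse graining of the joint realisation induced by $P_{\mmt{M}}$. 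In the language of Remark~\ref{rmk:MeasurableFunctions} this is just post-composition of the measurable function $\measf{m\wedge n}$ with the projection $\pi_{\mmt{M}}$.

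The proof is essentially bookkeeping, so there is no serious obstacle; the only thing to be careful about is the word ``identified.'' Definition~\ref{def:CG} and Definition~\ref{def:MarginalMeasurements} each produce a measurement only up to canonical relabelling of outcomes (a coarse graining is ``the measurement associated to a partition,'' a marginal is ``$\mmt{M}$'' itself), so the statement should be read as: there is a canonical bijection of label sets under which the coarse graining $\mmt{P_{\mmt{M}}}$ and the marginal $\mmt{M}$ coincide, and likewise for $\mmt{N}$. Making that canonical identification explicit — and noting it is compatible with realisations and with pushed-forward measures in the sense of Remark~\ref{rmk:pushfwrd} — is the substance of what needs to be written.
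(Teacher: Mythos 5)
Your proof is correct and follows exactly the paper's own argument: the paper likewise takes the partition of $\mmt{M}\times\mmt{N}$ into blocks $m\times\mmt{N}$ and identifies it with $\mmt{M}$ via $m\times\mmt{N}\longmapsto m$ (and analogously for $\mmt{N}$). Your additional remarks on compatibility with realisations and pushed-forward measures are sound but go beyond what the paper's two-line proof records.
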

\begin{proof}
 Given the Cartesian product $\mmt{M}\times\mmt{N}$, the partition $m\times\mmt{N}$ for $\mmt{M}\times\mmt{N}$ is naturally identified with the set $\mmt{M}$, by $m\times\mmt{N} \longmapsto m$.
 Analogously, for the partition $\mmt{M}\times n$.  
\end{proof}
This leads to another very important relation between measurements:
\begin{definition}\label{def:Compat}
 Measurements $\mmt{M}$ and $\mmt{N}$ are \emph{compatible} iff there exist a common coarse graining, from which both can be obtained as marginals.   
\end{definition}
\begin{remark}
Given two measurements and no other restriction, one can always define the joint measurement.
The strength of the notion of compatibility only shows up when we work on some nontrivial \emph{contextuality scenario}, which brings us to the next subsection.
\end{remark}
\subsection{Contextuality}\label{sub:Context}
Now we set the stage for the questions we are treating.
The central ingredient is a collection, $\mmts{X}$, of possible measurements, $\mmt{M}_k$, some of them can be compatible, while other not.
\begin{definition}\label{def:Context}
  Given a collection of measurements, $\mmts{X}$, a \emph{context} for the measurement $\mmt{M}\in\mmts{X}$ is a subset $C \subset \mmts{X}$ with $\mmt{M}\in C$ in which all measurements are compatible.
  A \emph{maximal context} for $\mmt{M} \in \mmts{X}$, $C$, is a context that can not be enlarged by including elements of $\mmts{X}$.
  In other words, if $C$ is a maximal context in $\mmts{X}$, for any $\mmt{N}\in\mmts{X}\setminus C$ there is some  $\mmt{M}' \in C$ incompatible with $\mmt{N}$.
\end{definition}
As already stated, in this construction, compatibility is not an inherent property of a set of measurements.
This demands the central notion of \emph{scenarios}:
\begin{definition}\label{def:Scenario}
  A \emph{measurement scenario}, or a \emph{contextuality scenario}, or a \emph{compatibility scenario} (which we will call simply a \emph{scenario}) is given by a pair $\mscen{X}{C}$, where $\mmts{X}$ is a collection of measurements and $\mcover{C}$ a compatibility cover for $\mmts{X}$. 
  A \emph{compatibility cover}, $\mcover{C}$, for the measurement collection $\mmts{X}$ is a collection of maximal contexts in $\mmts{X}$ such that each $\mmt{M} \in \mmts{X}$ belongs to at least one context in $\mcover{C}$.
\end{definition}
\begin{remark}
  The choice of demanding maximal contexts for the cover is technical.
  One great advantage is economy when describing scenarios.
  The other possible choice would be to impose that whenever $C \in \mcover{C}$, than $C'\in \mcover{C}$ for every $C'\subset C$.
  The later has the topological advantage of making simplicial complexes explicit.
\end{remark}

In usual experiments, for each context every measurements in the context can be performed and the results sampled allowing the inference of some probability distribution for the joint results of the measurements in each context.
\begin{definition}\label{def:EM}
 Given a scenario $\mscen{X}{C}$, an \emph{empirical model}, $\EM{E}$, is the association of each context $C \in \mcover{C}$ to a probability distribution of the results of the measurements in $C$, \ie $p_C: \prod_{\mmt{M}\in C}\mmt{M} \rightarrow \mathbb{R}$, with $p_c\de{\mathbf{m}} \geq 0$ and $\sum p_c\de{\mathbf{m}}  = 1$.
\end{definition}

Given an empirical model (also called a \emph{behaviour}), it also defines probability distributions for subsets of the (maximal) contexts, via marginalisation.
If $S\subset{C}$, then $p_S^C: \prod_{\mmt{M}\in S}\mmt{M} \rightarrow \mathbb{R}$ is given by
\begin{equation}\label{eq:marginal}
 p_S^C\de{\mathbf{s}} = \sum_{\mathbf{m}_{|S}=\mathbf{s}} p_C\de{\mathbf{m}},
\end{equation}
where $\mathbf{m}_{|S}=\mathbf{s}$ means that the restriction\footnote{Some readers may consider more elegant to consider a projection $\pi: \mathbf{m} \mapsto \mathbf{s}$ and the sum runs over the fibre over $\mathbf{s}$, $\pi^{-1}\de{\mathbf{s}}$.
The same projection can also be used to say that the marginal $p^C_S$ is the result of pushing forward $p_C$ through $\pi$.} of the outcomes $\mathbf{m}$ for the measurements in $C$ to those on $S$ gives the respective outcomes $\mathbf{s}$.

One important condition in those problems is the following:
\begin{condition}\label{cond:MC}
  If two contexts, $C$ and $C'$, overlap, the \emph{marginal condition} demands $\displaystyle{p_{C\cap C'}^C = p_{C\cap C'}^{C'}}$.
\end{condition}
Whenever this condition holds, we may drop the symbol from the context out and work with $p_{C\cap C'}$.
In other words, this is the necessary condition for a probability distribution to be defined on $C \cap C'$, and, consequently, in all of its subsets. 
\begin{definition}\label{def:NDist}
  An empirical model, $\EM{E}$, in a scenario, $\mscen{X}{C}$, is \emph{non-disturbing} if for every $C,C' \in \mcover{C}$ the marginal condition \ref{cond:MC} holds. 
\end{definition} 

After defining a non-contextual model, let us present two different definitions for an empirical model to be non contextual.
These definitions are shown equivalent by the Fine-Abramsky-Brandenburger Theorem \cite{Fine,AB}.

\begin{definition}\label{def:NCmodel}
 Given a scenario, $\mscen{X}{C}$, a \emph{non-contextual model} is a collection of probability distributions $\displaystyle{p_{\mmt{M}}^{\Lambda}: \mmt{M} \times \Lambda \rightarrow \mathbb{R}}$, with $\displaystyle{p_{\mmt{M}}^{\Lambda}\de{m,\lambda} \geq 0}$  and $\displaystyle{\sum_{m\in\mmt{M}} p_{\mmt{M}}^{\Lambda}\de{m,\lambda} = p^{\Lambda}\de{\lambda}}$, for all $\mmt{M}\in\mmts{X}$ and $\lambda\in \Lambda$, such that for every $C\in\mcover{C}$,
   \begin{equation}\label{eq:NCModel}
     p_C\de{\mathbf{m}} = \sum_{\lambda \in \Lambda} \prod_{\mmt{M}\in C} p_{\mmt{M}}^{\Lambda} \de{m,\lambda}.
   \end{equation}
\end{definition}
\begin{remark}\label{rmk:NCModel}
   The existence of a non-contextual model can be read as the existence of a probability space $\de{\Lambda, \Upsilon, p^{\Lambda}}$ from which one can obtain independent joint probabilities for each random variable in the scenario and lambda.
   Following the discussion in subsection \ref{sec:ClassProb}.\ref{sub:ClassOntology}, this is consistent with the idea that $\lambda$ determines\footnote{Explicitly, given a model \eqref{eq:NCModel}, fixing $\lambda$ only determines conditional probabilities $p_{\mmt{M}}\de{m|\lambda}$, however, as again those are classical probabilities, this means that we could refine more and obtain $\de{\Lambda', \Upsilon', p^{\Lambda'}}$ such that all those conditional probabilities be deterministic.} the outcome of each random variable and correlations are established since one cannot access the variable $\lambda$.
   This is essentially the reasoning for such additional (latent, in the language of \emph{causal structures}\cite{Pearl}) variable $\lambda$ be usually referred to as \emph{hidden}.  
\end{remark}

\begin{definition}\label{def:NCbyHV}
 An empirical model, $\EM{E}$, is \emph{non contextual by model} if it can be obtained from a non-contextual model.
\end{definition}
\begin{definition}\label{def:NCbyMarg}
 An emprical model, $\EM{E}$, is \emph{non contextual by marginals} if there is a joint probability distribution $p_{\mmts{X}}: \prod_{\mmt{M}\in\mmts{X}} \rightarrow \mathbb{R}$ such that for all $C \in \mcover{C}$, $p_C = p_C^{\mmts{X}}$.
\end{definition}
As already anticipated:
\begin{theorem}[Fine, Abramsky, Brandenburger]\label{thm:FAB}
 An empirical model, $\EM{E}$, is non contextual by model if, and only if, it is non contextual by marginals.
\end{theorem}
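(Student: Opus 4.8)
The plan is to prove the two implications separately, using that---in the finite, discrete setting we are working in---a non-contextual model \eqref{eq:NCModel} is really just a recipe for manufacturing joint distributions by ``averaging a product over the hidden variable''. It is convenient to first rewrite the model in terms of conditionals: for each $\mmt{M}$ and each $\lambda$ with $p^{\Lambda}\de{\lambda}>0$ set $p_{\mmt{M}}\de{m\,|\,\lambda} = p_{\mmt{M}}^{\Lambda}\de{m,\lambda}/p^{\Lambda}\de{\lambda}$, so that $\sum_{m} p_{\mmt{M}}\de{m\,|\,\lambda} = 1$ and \eqref{eq:NCModel} becomes $p_C\de{\mathbf{m}} = \sum_{\lambda} p^{\Lambda}\de{\lambda}\prod_{\mmt{M}\in C} p_{\mmt{M}}\de{m\,|\,\lambda}$.

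For the direction ``model $\Rightarrow$ marginals'', I would simply apply the \emph{same} averaging formula but with the product taken over the whole collection $\mmts{X}$ instead of a single context: define $p_{\mmts{X}}\de{\mathbf{m}} = \sum_{\lambda} p^{\Lambda}\de{\lambda}\prod_{\mmt{M}\in\mmts{X}} p_{\mmt{M}}\de{m\,|\,\lambda}$ on $\prod_{\mmt{M}\in\mmts{X}}\mmt{M}$. Nonnegativity is immediate, and summing over all $\mathbf{m}$ factorises coordinatewise, each factor summing to $1$, leaving $\sum_{\lambda}p^{\Lambda}\de{\lambda}=1$; so $p_{\mmts{X}}$ is a probability distribution. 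The remaining step is to check $p_C = p^{\mmts{X}}_C$: marginalising $p_{\mmts{X}}$ onto the coordinates of a context $C$ via \eqref{eq:marginal}, the sums over the outcomes of the measurements \emph{outside} $C$ delete---one clean factor at a time---exactly the conditionals $p_{\mmt{M}}\de{\cdot\,|\,\lambda}$ with $\mmt{M}\notin C$, and what survives is verbatim the right-hand side of \eqref{eq:NCModel}. Hence $\EM{E}$ is non contextual by marginals.

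For ``marginals $\Rightarrow$ model'', the idea is to take the hidden variable to be the global outcome assignment itself: put $\Lambda = \prod_{\mmt{M}\in\mmts{X}}\mmt{M}$, $\Upsilon = \Power{\Lambda}$, $p^{\Lambda} = p_{\mmts{X}}$, and let $p_{\mmt{M}}\de{\cdot\,|\,\lambda}$ be the Dirac distribution concentrated on the $\mmt{M}$-coordinate $\lambda_{\mmt{M}}$ of $\lambda$ (equivalently $p_{\mmt{M}}^{\Lambda}\de{m,\lambda} = p^{\Lambda}\de{\lambda}$ if $m = \lambda_{\mmt{M}}$ and $0$ otherwise, which trivially satisfies $\sum_m p_{\mmt{M}}^{\Lambda}\de{m,\lambda} = p^{\Lambda}\de{\lambda}$). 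With these deterministic response functions the product in \eqref{eq:NCModel} is just the indicator of ``$\lambda$ restricts to $\mathbf{m}$ on $C$'', so the $\lambda$-sum collapses to $p_{\mmts{X}}$ summed over that fibre, which equals $p^{\mmts{X}}_C\de{\mathbf{m}} = p_C\de{\mathbf{m}}$ by hypothesis. This produces a non-contextual model---indeed a deterministic one---reproducing $\EM{E}$.

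The honest remark about difficulty is that in this discrete formulation neither implication hides anything deep: the whole content is that conditioning on $\lambda$ renders the single-measurement statistics independent, so ``a global joint distribution'' and ``a hidden-variable average of products'' are two descriptions of the same object---the forward direction merely enlarges the product to all of $\mmts{X}$, the backward direction merely reinterprets a global assignment as the hidden variable. The only care needed is bookkeeping: keeping the weight $p^{\Lambda}\de{\lambda}$ and the conditional factors straight in \eqref{eq:NCModel} (one could equally absorb $p^{\Lambda}$ into a single distinguished factor), and noticing that the Dirac response functions above already give a deterministic model, so the refinement mentioned in Remark~\ref{rmk:NCModel} is not even needed here. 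The genuine obstacle would only surface for infinite or uncountable outcome sets, where $\Lambda = \prod_{\mmt{M}\in\mmts{X}}\mmt{M}$ must be equipped with a suitable $\sigma$-algebra and the relevant product-type measures shown to exist---exactly the measure-theoretic scaffolding of the sheaf-theoretic proof of Abramsky and Brandenburger, which the ``keep things simple'' stance of this paper deliberately sidesteps.
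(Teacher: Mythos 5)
Your proof is correct and follows the same route as the paper's own sketch: the forward direction multiplies the single-measurement distributions over all of $\mmts{X}$ and averages over $\lambda$ to build the global joint, and the backward direction takes the global outcome assignments themselves as the hidden variable with deterministic response functions. You merely supply the bookkeeping (conditionals, normalisation, the marginalisation check) that the paper leaves implicit.
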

Given this equivalence, whenever the distinction is irrelevant we will just call those empirical models non contextual.
A proof can be made very constructive: given a non-contextual model, by multiplying all $p_{\mmt{M}}^{\Lambda}$ and then marginalising on $\Lambda$, one obtains the collective distribution.
On the other way around, the collective distribution can always be made as a convex combination of deterministic distributions, which assign deterministic values for each random variable; one simply needs to use the variables of the convex combination to act as $\lambda$. 
We hope our work will shed some new light on this important result, by stressing the r\^ole played by topology in it.

\subsection{Measurable Fibre Bundles}\label{sub:MeasFB}
Now we start to glue concepts.
Given a scenario, $\mscen{X}{C}$, by construction for each $C \in \mcover{C}$, one can choose a measurable space $\de{\Omega^C,\Sigma^C}$ which realises the joint measurement $\displaystyle{\bigwedge _{\mmt{M} \in C} \mmt{M}}$.
Each $\mmt{M}\in C$ gives a partition of $\Sigma^C$ and defines a coarsening of $\Sigma^C$, which we will denote $\Sigma^C_{\mmt{M}}$. 
Whenever $C \cap C' \neq \emptyset$, the joint measurement of all elements of $C \cap C'$ will define the coarsenings $\Sigma^C_{C\cap C'}$ and $\Sigma^{C'}_{C\cap C'}$.
The labelling given by such coarsening defines a bijection  $\Sigma^C_{C\cap C'} \leftrightarrow \Sigma^{C'}_{C\cap C'}$.
By identifying the sets related by such bijections, we have an abstract \sa for each $\mmt{M} \in \mmts{X}$ given by the equivalence class of all $\Sigma^C_{\mmt{M}}$, for all $C \ni \mmt{M}$, which we denote by $\Sigma_{\mmt{M}}$.
This construction prepares the following:
\begin{definition}\label{def:MeasurableBundles}
  Given a scenario $\mscen{X}{C}$, a \emph{Measurable Fibre Bundle} is the attachment of a measurable space $\de{\Omega^C,\Sigma^C}$ to each context.
  Whenever $C \cap C' \neq \emptyset$, the identification $\Sigma^C_{C\cap C'} \leftrightarrow \Sigma^{C'}_{C\cap C'}$ is considered, leading to the \emph{fibre over the measurement $\mmt{M}$}, $\Sigma_{\mmt{M}}$.
  The \emph{projection}, $\pi$, is defined only for the sets $S \in \Sigma^C$ associated to a set in $\Sigma^C_{\mmt{M}}$.  Naturally, in those cases $\pi\de{S} = \mmt{M}$.
\end{definition}
It is quite important to note that this is done on the level of the \sa but not necessarily on the level of the sample space.
Let us discuss a situation where one can go up until the level of sample spaces.

\begin{definition}\label{def:SampleBundle}
  A measurable fibre bundle, $\de{\Omega^C,\Sigma^C}$ for a scenario, $\mscen{X}{C}$ is a \emph{Sample Fibre Bundle} if the bijections $\Sigma^C_{C\cap C'} \leftrightarrow \Sigma^{C'}_{C\cap C'}$ can consistently be obtained from relations $\Omega^C \leftrightarrow \Omega^{C'}$.
\end{definition}
\begin{remark}
  It is important to note that those relations are not necessarily functions, since multiple associations are allowed in both directions, however, their images must generate partitions subordinated to $C\cap C'$.
\end{remark}
\begin{example}\label{ex:Product}
  Given a scenario, $\mscen{X}{C}$, for each $\mmt{M} \in \mmts{X}$ choose a measurable space, $\de{\Omega^{\mmt{M}},\Sigma^{\mmt{M}}}$.
  For each $C \in \mcover{C}$, take 
     \begin{equation}\label{eq:SampleBundle}
        \de{\Omega^C,\Sigma^C} = \prod_{\mmt{M}\in C}\de{\Omega^{\mmt{M}},\Sigma^{\mmt{M}}}.
     \end{equation}
   Whenever $\mmt{M} \in C \cap C'$, the identification $\Sigma^C_{C\cap C'} \leftrightarrow \Sigma^{C'}_{C\cap C'}$ must fix the $\Sigma^{\mmt{M}}$ component in the product.
   This can be naturally extended to $\Omega^{\mmt{M}}$, also acting as the identity map on it.
   The Measurable Bundle just constructed is a Sample Bundle.  
\end{example}
The example above explores one very important characteristic of fibre bundles: it is piecewise a product\footnote{The best mathematical word is \emph{locally}, but we will not use it here in order to avoid any misunderstanding related to the locality concept of Bell.}.
However, there is a little bit more: each $\de{\Omega^C,\Sigma^C}$ is actually a restriction of a global product:
\begin{equation}\label{eq:global}
  \de{\Omega,\Sigma} = \prod_{\mmt{M} \in \mmts{X}} \de{\Omega^{\mmt{M}},\Sigma^{\mmt{M}}}.
\end{equation} 
In this sense, those fibre bundles are trivial.
A nontrivial example comes from the $n-$cycle scenario, bearing in mind the celebrated M\"obius strip:
\begin{example}\label{ex:Moebius}
 Consider a dychothomic  $n$-cycle scenario, $\mscen{X}{C}$, where $\mmts{X} = \DE{\mmt{M}_0,\mmt{M}_1,\ldots,\mmt{M}_{n-1}}$, each $\mmt{M}_i$ a binary set, and $\mcover{C} = \DE{C_i}_{i=0,\ldots,n-1}$, with $C_i = \DE{\mmt{M}_i,\mmt{M}_{i+1}}$, and addition understood modulo $n$.
 For each $\mmt{M}_i$, take $\Omega^{\mmt{M}_i} = \De{-1,1}$ and Borel sets as $\Sigma^{\mmt{M}_i}$.
 For each $C_i$, take $\de{\Omega^{C_i},\Sigma^{C_i}} = \de{\Omega^{\mmt{M}_i},\Sigma^{\mmt{M}_i}} \times \de{\Omega^{\mmt{M}_{i+1}},\Sigma^{\mmt{M}_{i+1}}}$ and,  as identifications $\Omega_i \leftrightarrow \Omega_{i+1}$ take identity functions $\omega_i \mapsto \omega_{i+1}$ for all, but one values of $i$, for which, $\omega _i \mapsto -\omega_{i+1}$ is used.
 Since those identifications extend to Borel sets, here we have explictly constructed a Sample Bundle.
\end{example} 
In contrast to example \ref{ex:Product}, in example \ref{ex:Moebius} we have nontrivial bundles. 
\begin{remark}\label{rmk:sectionsMoebius}
It is important to recognise that for all values of $c \neq 0$, the piecewise constant function $i \mapsto c$ fails to generate a global function.
On the other hand, the constant function $i \mapsto 0$ is globally defined and this has interesting consequences to which we shall come back latter on.
\end{remark}
%


\subsection{Probability Fibre Bundles}\label{sub:ProbFB}
Now we will include probabilities and empirical models in our discussion.
The measurable bundles of subsection \ref{sub:MeasFB} will receive probability measures in each fibre and the marginal condition will receive a new interpretation.

The construction specialises the notion of measurable bundle.
Given a scenario $\mscen{X}{C}$, we may attach a probability space to each context: $\de{\Omega^C,\Sigma^C,\mu^C}$.
Whenever $C\cap C' \neq \emptyset$, we want not only to have the identification $\Sigma^C_{C\cap C'} \leftrightarrow \Sigma^{C'}_{C\cap C'}$, but also to define a probability measure on it.
This is exactly the marginal condition \ref{cond:MC} applied to the restrictions $\mu^{C}_{C\cap C'}$ and $\mu^{C'}_{C\cap C'}$.
\begin{definition}\label{def:ProbFB}
  Given a scenario $\mscen{X}{C}$, a \emph{Probability Fibre Bundle} is the attachment of a probability space $\de{\Omega^C,\Sigma^C,\mu^C}$ to each context.
  Whenever $C\cap C' \neq \emptyset$, the identification $\Sigma^C_{C\cap C'} \leftrightarrow \Sigma^{C'}_{C\cap C'}$ is considered and the marginal condition $\mu^C_{C\cap C'} = \mu^{C'}_{C\cap C'}$ is demanded, leading to the fibre over the measurement $\mmt{M}$, $\de{\Sigma_{\mmt{M}},\mu_{\mmt{M}}}$.
  The projection, $\pi$, is defined only for the sets $S\in \Sigma^{C}$ associated to a set in $\Sigma^C_{\mmt{M}}$.
  Naturally, in those cases, $\pi\de{S,\mu_{\mmt{M}}\de{S}} = \mmt{M}$.
\end{definition}

The definition of Probability Fibre Bundles, \ref{def:ProbFB}, should be compared to the definition of Empirical Model, \ref{def:EM}.
The latter gives the necessary data to the former, whenever the marginal condition holds for all overlapping contexts.
The problem can be stated as this: given a non-disturbing empirical model on a measurement scenario $\mscen{X}{C}$, one can construct a probability fibre bundle over the same scenario.
Two questions appear: can we upgrade such a bundle to a sample fibre bundle?
If so, is this bundle trivial?


To close this section, let us point out another aspect.
In subsection \ref{sec:ClassProb}.\ref{sub:ProbSimplex} we introduced the probability simplex.
For scenarios with finite contexts of finite measurements, each context generates a probability simplex.
Whenever we condition on any (subset) of the measurements in the context, we obtain a smaller simplex, related to the remaining measurements.
If we consider the intersection of two contexts, there is no meaning in conditioning on incompatible variables.
However, the marginals are meaningful and they represent coarse grainings which should, then, be identifiable.
What is the geometry/topology behind such identifications and, specially, coming from such identifications over a given scenario? 

\section{Fibre Bundle approach to Contextuality}\label{sec:FBapproach}
Now we have everything set and can discuss in more details how contextuality manifests in those objects.
The central result is the translation of Fine-Abramsky-Brandeburguer Theorem into the question of triviality of a Probability Fibre Bundle.
This brings to the arena questions on extensions and obstructions, \ie on possibilities and impossibilities.
It also shows the centrality of the so far ignored notion of subscenarios, which we shall discuss on section \ref{sec:Subscenarios}.
\subsection{An Example-Oriented Discussion}\label{sub:examples}
The first thing to be understood is the generalisation of the situation shown in subsection \ref{sec:ClassProb}.\ref{sub:ProbSimplex}.
There, for a finite collection of finite sets, the sample space could be identified with the atoms of the \sa of the problem.
This can be put together with example \ref{ex:Product} to show that any finite scenario allows for a trivial sample bundle.
Let us discuss another example of non-trivial sample bundle.
\begin{subequations}
\begin{example}[Hollow Triangle]\label{ex:HollowTriangle}
 Consider the scenario $\mscen{X}{C}$ given by 
 \begin{align}\label{eq:HollowTriangleScen}
   \begin{split}
      \mmts{X} &= \DE{\mmt{M}_a,\mmt{M}_b, \mmt{M}_c},\\ 
      \mcover{C} &= \DE{\DE{\mmt{M}_a,\mmt{M}_b}, \DE{\mmt{M}_b,\mmt{M}_c},\DE{\mmt{M}_c,\mmt{M}_a}},
   \end{split}
  \end{align} 
 with $\mmt{M}_a = \DE{\uparrow,\downarrow}$, $\mmt{M}_b = \DE{0,1}$, and $\mmt{M}_c = \DE{g,r}$.
 For each context take the minimal measurable space 
 \begin{align}\label{eq:HollowTriangleSA}
   \begin{split}
     \de{\Omega^{ab},\Sigma^{ab}} &= \de{\mmt{M}_a\times\mmt{M}_b, \Power{\mmt{M}_a\times\mmt{M}_b}},\\
     \de{\Omega^{bc},\Sigma^{bc}} &= \de{\mmt{M}_b\times\mmt{M}_c, \Power{\mmt{M}_b\times\mmt{M}_c}},\\
     \de{\Omega^{ca},\Sigma^{ca}} &= \de{\mmt{M}_c\times\mmt{M}_a, \Power{\mmt{M}_c\times\mmt{M}_a}}.
   \end{split}
  \end{align} 
  Now we need the coarse graining in order to define the identifications.
  Here we will introduce contextuality, which can be seen as a twist (topologically saying): for each context, the first variable will have the ``natural'' coarse graining, while the second will be ``inverted''.
  Explicitly:
 \begin{align}\label{eq:HollowTriangleSA}
   \begin{split}
     \Omega^{ab}_a &= \DE{\uparrow = \DE{\de{\uparrow,0},\de{\uparrow,1}},\downarrow = \DE{\de{\downarrow,0},\de{\downarrow,1}}}, \\
     \Omega^{ab}_b &= \DE{0 = \DE{\de{\uparrow,1},\de{\downarrow,1}},1 = \DE{\de{\uparrow,0},\de{\downarrow,0}}}, \\
     \Omega^{bc}_b &= \DE{0 = \DE{\de{0,g},\de{0,r}},1 = \DE{\de{1,g},\de{1,r}}}, \\
     \Omega^{bc}_c &= \DE{g = \DE{\de{0,r},\de{1,r}},r = \DE{\de{0,g},\de{1,g}}}, \\
     \Omega^{ca}_c &= \DE{g = \DE{\de{g,\uparrow},\de{g,\downarrow}},r = \DE{\de{r,\uparrow},\de{r,\downarrow}}}, \\
     \Omega^{ca}_a &= \DE{\uparrow = \DE{\de{g,\downarrow},\de{r,\downarrow}},\downarrow = \DE{\de{g,\uparrow},\de{r,\uparrow}}}. 
   \end{split}
  \end{align} 
  This defines a non-trivial sample bundle.
  We shall keep this notation that makes contextuality and the twist explicit, by using such minimal construction where $\Sigma^C = \Power{\Omega^C}$ and the coarse grainings are projections or ``twisted projections''; but in order to not consider this as artificially made, the reader should also consider the equivalent case where $\Omega^C_S$ is a binary set with labels unrelated to the elements of $\Omega^C$.
  
  To see how contextuality is deeply related to such bundle, we should interpret what a choice like $\de{\uparrow, 1,g}$ would mean in terms of the measurements.
  In the context $\DE{\mmt{M}_a,\mmt{M}_b}$ this implies outcomes $\de{\uparrow, 0}$, while in the context $\DE{\mmt{M}_b,\mmt{M}_c}$ this implies $\de{1, r}$, which already shows the contextuality of $\mmt{M}_b$. 
  Accordingly, for the context $\DE{\mmt{M}_c,\mmt{M}_a}$ this implies $\de{g, \downarrow}$, making clear that those ``global assignments'' which try to define values for all measurements can exhibit contextuality.
  Generalising the choice made, any assignment $\de{a,b,c}$ would imply opposite answers for each measurement on its two contexts. 
\end{example} 
\end{subequations}
This example generalises for any size, $n$, of the cycle.
As we shall see later on, there is a parity issue decisive in order to distinguish \emph{intrinsic} contextuality from \emph{removable} contextuality.

A very beautiful phenomenon shows up when building probability bundles over sample bundles like the one in example \ref{ex:HollowTriangle}.
In those examples, contextuality is manifest in its stronger case: disturbance.
Suppose one can characterise that a ``state'' $\de{a,b,c}$ was prepared. 
Then, when measuring $\mmt{M}_a$, the result $a$ would imply the context $\de{\mmt{M}_a,\mmt{M}_b}$, while the result $\bar{a}$ would imply the context $\de{\mmt{M}_c,\mmt{M}_a}$.
\begin{example}\label{ex:SParable}
  Consider the Hollow Triangle scenario of example \ref{ex:HollowTriangle} and the following empirical model on it:
   \begin{align}\label{eq:HollowTriangleScen}
   \begin{split}
     p_{ab}\de{\uparrow,1} &= \frac12 = p_{ab}\de{\downarrow,0},\\ 
     p_{bc}\de{0,r} &= \frac12 = p_{bc}\de{1,g},\\ 
     p_{ca}\de{g,\downarrow} &= \frac12 = p_{ca}\de{r,\uparrow}, 
   \end{split}
   \end{align}
with all other probabilities null.
First, by checking the marginals, see that this empirical model is nondisturbing.
Now, one can easily check that this model can be considered as a balanced   convex combination of the two ``global assignments'': $\de{\uparrow, 0, g}$ and $\de{\downarrow,1,r}$.
Other three equally interesting sample bundle realisations of (contextual) probability bundles are from the other combinations of $\de{a,b,c}$ with $\de{\bar{a},\bar{b},\bar{c}}$.
Topologically, those non-disturbing empirical models are related to the null constant function of remark \ref{rmk:sectionsMoebius}.
\end{example} 
We have seen two examples of sample bundles, one trivial, one not, and some probability bundles which realise given empirical models.

\subsection{Making Topology (more) Explicit}\label{sub:Topo}
The central question now appears: given a scenario, can all empirical models be obtained as trivial probability bundles?
The interesting answer is: \emph{only the non-contextual ones!}
That is how Fine-Abramsky-Brandenburger Theorem \ref{thm:FAB} translates here:
\begin{theorem}[Fibre Budle version of Fine-Abramsky-Brandenburger]\label{thm:BundleFAB}
   Given a scenario, $\mscen{X}{C}$, an empirical model can be realised as a trivial probability bundle iff the model is non contextual. 
\end{theorem}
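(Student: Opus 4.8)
The plan is to reduce Theorem~\ref{thm:BundleFAB} to the already-established Fine--Abramsky--Brandenburger Theorem~\ref{thm:FAB} by dictionary translation: ``trivial probability bundle'' must be shown to be exactly the geometric incarnation of ``non contextual by marginals'', and then Theorem~\ref{thm:FAB} hands us ``non contextual by model'' for free. So the first step is to pin down the definition of \emph{trivial}: by analogy with Example~\ref{ex:Product} and the discussion around \eqref{eq:global}, a probability bundle over $\mscen{X}{C}$ is trivial when each $\de{\Omega^C,\Sigma^C,\mu^C}$ arises as the restriction (marginal) of a single global probability space $\de{\Omega,\Sigma,\mu}$ with $\Omega = \prod_{\mmt{M}\in\mmts{X}}\Omega^{\mmt{M}}$, the identifications on overlaps being the identity on the shared factors. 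I would state this carefully as a working definition at the top of the proof, since the statement of the theorem leans on it.

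Next I would argue the easy direction. Suppose $\EM{E}$ is non contextual; by Theorem~\ref{thm:FAB} it is non contextual by marginals, so there is a global distribution $p_{\mmts{X}}$ on $\prod_{\mmt{M}\in\mmts{X}}\mmt{M}$ with $p_C = p_C^{\mmts{X}}$ for every $C\in\mcover{C}$. Take $\Omega^{\mmt{M}} = \mmt{M}$ with $\Sigma^{\mmt{M}} = \Power{\mmt{M}}$, form the global product \eqref{eq:global}, put $\mu = p_{\mmts{X}}$ on it, and let $\mu^C$ be the pushforward to $\de{\Omega^C,\Sigma^C} = \prod_{\mmt{M}\in C}\de{\Omega^{\mmt{M}},\Sigma^{\mmt{M}}}$ along the coordinate projection. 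By construction this is the product/trivial bundle of Example~\ref{ex:Product} (hence a genuine measurable, indeed sample, bundle), the identifications on $C\cap C'$ act as the identity on the shared coordinates, and $\mu^C$ restricted to the $C\cap C'$ coarsening equals the $C\cap C'$-marginal of $p_{\mmts{X}}$, independent of whether we came from $C$ or $C'$ — so the marginal condition of Definition~\ref{def:ProbFB} holds and the bundle realises $\EM{E}$.

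For the converse I would start from a trivial probability bundle realising $\EM{E}$, i.e.\ a global $\de{\Omega,\Sigma,\mu}$ whose marginals on the contexts reproduce the $p_C$. Pushing $\mu$ forward along the full measurement map $\Omega \to \prod_{\mmt{M}\in\mmts{X}}\mmt{M}$ (the product of the realisations $\measf{m}$ of Remark~\ref{rmk:MeasurableFunctions}) produces a single distribution $p_{\mmts{X}}$ on $\prod_{\mmt{M}\in\mmts{X}}\mmt{M}$; the compatibility of pushforward with marginalisation, together with the fact that the bundle's overlap identifications are coordinate-wise identities, forces $p_C^{\mmts{X}} = p_C$ for every $C$. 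Hence $\EM{E}$ is non contextual by marginals, and Theorem~\ref{thm:FAB} upgrades this to non contextual, closing the equivalence.

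The main obstacle I anticipate is not the measure-theoretic bookkeeping but the \emph{definitional} one: the excerpt never writes down what ``trivial'' means for a probability bundle, only illustrating triviality through the product construction and the remark ``in this sense, those fibre bundles are trivial''. The proof therefore stands or falls on isolating the right notion of triviality — plausibly ``globally a product, with all transition identifications reducing to identities on shared factors'' — and checking that this notion is invariant under the allowed re-descriptions of a bundle (e.g.\ replacing each $\Omega^C_S$ by an abstract binary set, as the Hollow Triangle discussion does). A secondary subtlety is that a non-disturbing empirical model is a priori only combinatorial data on the finite outcome sets; one must be slightly careful, when $\Omega^{\mmt{M}}$ is taken larger than $\mmt{M}$ (as in Example~\ref{ex:Moebius}, where $\Omega^{\mmt{M}_i}=[-1,1]$), that the pushforward argument still lands the distribution on $\prod\mmt{M}$ rather than on $\prod\Omega^{\mmt{M}}$ — but this is handled uniformly by always composing with the realisation maps $\measf{m}$ before comparing.
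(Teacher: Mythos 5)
Your proposal is correct and follows essentially the same route as the paper's own (much terser) proof: both directions reduce triviality of the bundle to the existence of a single global distribution whose context marginals reproduce the $p_C$, and then invoke Theorem~\ref{thm:FAB} to pass between ``non contextual by marginals'' and ``non contextual by model''. Your explicit working definition of triviality and the care with pushforwards along the realisation maps fill in details the paper leaves implicit, but they do not change the argument.
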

\begin{proof}
   If the empirical model can be realised as a trivial probability bundle, then the sample bundle can be extended to a product bundle and the model is non contextual by marginals.
   
   If the empirical model, $\EM{E}$, is non contextual there is a global probability distribution for the measurements in $\mmts{X}$ and this is an empirical model $\tilde{\EM{E}}$ in the scenario $\mscen{X}{X}$, giving a trivial bundle  whose restriction to $\mscen{X}{C}$ gives a trivial probability bundle realising $\EM{E}$.
\end{proof}

In the proof above we already used one instance of a very important topological result: not all basis (in our case, scenarios) can support non-trivial bundles\footnote{In the proof we explicitly use that a trivial scenario $\mscen{X}{X}$ supports only trivial bundles.}.
This explains and extends a lot the remark \ref{rmk:ProductMeasurement}, and put in a topologically broader context the result on ref.~\cite{Budronietal}.
This deserves a little more discussion.
In a very lousy way, we could say that trivial basis can only generate trivial bundles.
To make it precise, one should define what a trivial basis mean.
One very good example  of ``trivial basis''' is any contractible hypergraph.
Since a cycle always make a (hyper-)graph non-contractible, in graph language, the trivial basis/scenarios are trees.
\begin{corollary}[Budroni-Morchio]\label{cor:Costantino}
  If a scenario $\mscen{X}{C}$ is free of cycles, than it is also free of contextuality.
\end{corollary}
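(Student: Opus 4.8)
The statement to prove: if a scenario $\mscen{X}{C}$ is free of cycles, then every empirical model on it is non-contextual. Given Theorem \ref{thm:BundleFAB}, it suffices to show that on a cycle-free scenario every non-disturbing empirical model can be realised as a trivial probability bundle (equivalently, by Theorem \ref{thm:FAB}, that it is non-contextual by marginals, i.e. admits a global joint distribution restricting correctly to every context). So I need to think about what "free of cycles" buys me.

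The plan is to interpret the scenario as a hypergraph whose vertices are the measurements in $\mmts{X}$ and whose hyperedges are the maximal contexts in $\mcover{C}$, and "free of cycles" as saying this hypergraph is a (hyper)tree — it can be built up one context at a time so that each newly attached context meets the union of the previous ones in at most the vertex set of a single already-present context (in the graph case: a tree, attached edge by edge, each new edge sharing at most one vertex). First I would set up an ordering $C_1, C_2, \ldots, C_k$ of the contexts witnessing this tree structure, so that $C_{j}\cap\bigl(\bigcup_{i<j} C_i\bigr) = S_j$ where $S_j$ is contained in some $C_{i(j)}$ with $i(j) < j$, and on $S_j$ the marginal condition already pins down a distribution $p_{S_j}$ (well defined by non-disturbance). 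Then I would build the global joint distribution $p_{\mmts{X}}$ by induction: having a joint distribution $q_{j-1}$ on $\bigcup_{i<j} C_i$ whose context-marginals agree with $\EM{E}$, extend it to $\bigcup_{i\le j} C_i$ by conditioning, i.e. set $q_j(\mathbf{x}, \mathbf{y}) = q_{j-1}(\mathbf{x})\, p_{C_j}(\mathbf{y}\mid S_j)$ for outcomes $\mathbf{y}$ on $C_j\setminus S_j$, using that $q_{j-1}$ restricted to $S_j$ equals $p_{S_j}$ equals the $S_j$-marginal of $p_{C_j}$ (handling the measure-zero case of the conditional by any choice). Because $C_j$ attaches along a single vertex-set living inside one prior context, there is no second overlap that could create an inconsistency — this is precisely where acyclicity is used. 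After $k$ steps, $q_k = p_{\mmts{X}}$ is a global distribution with $p_{\mmts{X}}|_{C} = p_C$ for all $C$, so $\EM{E}$ is non-contextual by marginals, hence (Theorem \ref{thm:FAB}, Theorem \ref{thm:BundleFAB}) realisable as a trivial bundle.

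The main obstacle — and the step I would be most careful about — is making "free of cycles" precise enough that the single-overlap inductive attachment is actually justified: in a general hypergraph a "no cycle" condition must be stated so that it forbids two distinct contexts later in the order both overlapping the already-built part in ways that force a two-variable-or-more consistency condition (which is exactly what can fail and produce contextuality, as in the $n$-cycle examples). I would phrase this as: the nerve/intersection structure of $\mcover{C}$ is a tree, or there is an elimination order in which each context's overlap with the rest is entirely inside one other context. A secondary, purely technical point is the handling of conditioning when a marginal outcome has probability zero; this is harmless since one can extend by an arbitrary conditional kernel there, and I would mention it in one line rather than belabour it. Finally I would note that the converse direction (cycles being necessary for contextuality, already implicit in Remark \ref{rmk:ProductMeasurement}) is not claimed by the corollary, so the one-directional argument above suffices.
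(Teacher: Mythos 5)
Your argument is correct, but it is a genuinely different route from the one the paper takes. The paper does not construct anything: it derives the corollary from Theorem~\ref{thm:BundleFAB} together with the (informally stated) topological principle that a contractible base --- a tree, in graph language --- can only support trivial bundles, so every probability bundle over a cycle-free scenario is already a product and the model is non-contextual by marginals. You instead give the classical constructive proof: order the contexts so that each new one meets the union of its predecessors inside a single earlier context (the running-intersection property), and glue the joint distribution step by step by conditioning, $q_j = q_{j-1}\cdot p_{C_j}(\,\cdot\mid S_j)$, using non-disturbance to match the overlap marginal. This is essentially Vorob'ev's argument, which the paper itself cites as the origin of the gluing question, and it supplies the rigour that the paper's appeal to contractibility leaves implicit; what it costs you is exactly the point you flag, namely that ``free of cycles'' must be formalised as hypergraph acyclicity (existence of a join-tree ordering), since for general hyperedge covers mere acyclicity of some derived graph does not by itself guarantee the single-overlap attachment. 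The paper's route buys a cleaner conceptual statement (triviality of the base forces triviality of the bundle) and slots directly into the obstruction-theoretic picture of section~\ref{sec:FBapproach}, but it proves less on its face; your route proves more but needs the combinatorial definition nailed down. Your closing remark is also right: the corollary is one-directional, so you need not address whether cycles are sufficient for contextuality.
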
   
Topologically, cycles allow for non-trivial homology\footnote{They are closed ``curves'' which are not boundaries.}.
In refs.\cite{RSS,ASBKLM,OTR,Caru18}, the authors point out the influence of cohomology in contextuality and nonlocality.
We hope this bundle approach will reforce this relation.
As characterised in Thm.~\ref{thm:BundleFAB}, the central question in contextuality is about extending a probability fibre bundle into a product one.
In topology, it is usual to work with \emph{obstructions}, \eg a nontrivial homology class implies the nontriviality of some bundle and forbids such extension.
In this sense, a nontrivial first homology group says that a scenario allows for contextuality, while a nontrivial cohomology class is a way of witnessing contextuality\cite{RSS,ASBKLM,OTR,Caru18}.





\subsection{Some comments on models}\label{sub:Models}
 It is interesting to come back to the example of the Hollow Triangle, \ref{ex:HollowTriangle}, and its generalisations for the $n$-cycle.
 We naturally should contrast it with the product bundle, example \ref{ex:Product}, build on the same scenario.
 A very natural question is: are there other interesting sample bundles in this scenario?
 The answer is: essentially no!
 One could guess: the product bundle has no twist while in example \ref{ex:HollowTriangle} we made three twists; what about one or two twists?
 Interestingly, these cases correspond to relabelings of the previous two cases: if we take the product bundle but ``flip'' one of its variables, it will generate a bundle with two twists, which, however, is something we can call \emph{removable contextuality}.
 Such a bundle is not explicitly a product, but it is isomorphic to one, so it can not support contextuality.
 Analogously, if we flip on variable from the example \ref{ex:HollowTriangle}, we will ``eliminate two flips'', but we still get a non-trivial bundle, as one should guess.
 
Another interesting point to call is that while trivial sample bundles allow for ``realistic models'' and classical interpretations, those non-trivial sample bundles can be considered as ``realistic'' models for ``classical'' interpretations on topologically richer ontological spaces\footnote{Again, no one has to adhere to such ``ontology'', neither the author. But is seems very beautiful and deserves more study, at least from the mathematical viewpoint.}.

\section{Contextuality Subscenarios}\label{sec:Subscenarios}
When we interpret noncontextuality as the possibility of describing an empirical model using a trivial probability bundle, \ie as an extension problem, another concept pops up: what about other extensions?
In order to make this question more precise, we need a partial order relation for scenarios:
\begin{definition}\label{def:Subscenario}
 A scenario $\mscen{X'}{C'}$ is a \emph{subscenario} of $\mscen{X}{C}$ if $\mmt{X'} \subseteq \mmt{X}$ and $\mcover{C'} \leq \mcover{C}$, where this last condition means that for every $C' \in \mcover{C'}$ there is $C \in \mcover{C}$ such that $C' \subseteq C$.
\end{definition}
We will denote $\mscen{X'}{C'} \preceq \mscen{X}{C}$ when $\mscen{X'}{C'}$ be a subscenario of $\mscen{X}{C}$.
\begin{remark}\label{rmk:subscen}
The classical scenarios are $\mscen{X}{X}$, for any $\mmts{X}$, and for every scenario it is true that $\mscen{X}{C} \preceq \mscen{X}{X}$.
\end{remark}
There are two canonical ways of creating subscenarios, and any subscenario can be obtained using any or both of them combined.
\begin{definition}\label{def:IndSubscen}
  $\mscen{X'}{C'} \preceq \mscen{X}{C}$ is an \emph{induced subscenario} if $\mcover{C'} = \mcover{C} \sqcap \Power{\mmts{X'}}$, where this last symbol is used in the sense that $C'\in \mcover{C} \sqcap \Power{\mmts{X'}}$ iff one of two situations happen: either $C' \in \mcover{C} \cap \Power{\mmts{X'}}$ or $C' = \tilde{C}_{|X'}$ for some $\tilde{C} \in \mcover{C}$. 
\end{definition}
\begin{definition}\label{def:RestrSubscen}
 $\mscen{X'}{C'} \preceq \mscen{X}{C}$ is a \emph{(context-)restricted subscenario} if $\mmts{X'} = \mmts{X}$. 
\end{definition}
By remark \ref{rmk:subscen}, every scenario is a context-restricted subscenario of the classical scenario for measurements $\mmts{X}$.

Definition \ref{def:NCbyMarg} can now be extended:
\begin{definition}\label{def:extend}
 Given $\mscen{X'}{C'} \preceq \mscen{X}{C}$, an empirical model $\EM{E'}$ in the scenario $\mscen{X'}{C'}$ \emph{extends} to the scenario $\mscen{X}{C}$ if there is an empirical model $\EM{E}$ in $\mscen{X}{C}$ such that for all $C' \in \mcover{C'}, p_{C'}^{\EM{E}} = p_{C'}^{\EM{E'}}$. 
\end{definition}
Simply putting those definitions together one gets:
\begin{theorem}\label{thm:NCasClassicalExtension}
 An empirical model $\EM{E}$ in $\mscen{X}{C}$ is noncontextual iff it extends to the classical scenario $\mscen{X}{X}$.
\end{theorem}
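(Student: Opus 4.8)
The plan is to prove the theorem by chaining together definitions and previously established results rather than by any new computation. The key observation is that Theorem~\ref{thm:NCasClassicalExtension} is essentially a dictionary entry: ``noncontextual'' and ``extends to the classical scenario'' are two names for the same condition once one unfolds Definition~\ref{def:NCbyMarg} (noncontextual by marginals), Definition~\ref{def:extend} (extension of empirical models), and Remark~\ref{rmk:subscen} (every scenario is a subscenario of its classical counterpart $\mscen{X}{X}$). Since Theorem~\ref{thm:FAB} (Fine--Abramsky--Brandenburger) already identifies ``noncontextual by model'' with ``noncontextual by marginals'', it suffices to show the latter matches ``extends to $\mscen{X}{X}$''.

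First I would recall, via Remark~\ref{rmk:subscen}, that $\mscen{X}{C} \preceq \mscen{X}{X}$, so the notion of extending $\EM{E}$ to the classical scenario is well-posed: it asks for an empirical model $\tilde{\EM{E}}$ on $\mscen{X}{X}$ with $p^{\tilde{\EM{E}}}_{C} = p^{\EM{E}}_{C}$ for every $C \in \mcover{C}$. Next I would unwind what an empirical model on $\mscen{X}{X}$ \emph{is}: the cover of $\mscen{X}{X}$ is the single maximal context $\mmts{X}$ itself, so an empirical model on it is by Definition~\ref{def:EM} precisely one probability distribution $p_{\mmts{X}}\colon \prod_{\mmt{M}\in\mmts{X}}\mmt{M} \to \R$ with $p_{\mmts{X}}\geq 0$ and $\sum p_{\mmts{X}} = 1$. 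Then the extension condition $p^{\tilde{\EM{E}}}_{C} = p^{\EM{E}}_{C}$ becomes, using the marginalisation formula \eqref{eq:marginal}, exactly $p_C = p_C^{\mmts{X}}$ for all $C\in\mcover{C}$ --- which is word-for-word the defining condition in Definition~\ref{def:NCbyMarg} for $\EM{E}$ to be noncontextual by marginals.

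Combining these, $\EM{E}$ extends to $\mscen{X}{X}$ if and only if it is noncontextual by marginals, and by Theorem~\ref{thm:FAB} this holds if and only if $\EM{E}$ is noncontextual by model, i.e.\ noncontextual. This closes the argument in both directions simultaneously, since every step is an equivalence. The only point requiring a little care --- and the ``main obstacle'', such as it is --- is checking that an empirical model on the classical scenario really does restrict to a genuine empirical model on $\mscen{X}{C}$ (so that the extension is legitimate and not vacuous): one must verify that the marginals $p_C^{\mmts{X}}$ are nonnegative and normalised, which is immediate from \eqref{eq:marginal}, and that they automatically satisfy the marginal condition~\ref{cond:MC} on overlaps, which follows because any two marginals of a single global distribution agree on their common coarse graining. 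Once this compatibility bookkeeping is in place, the theorem is just the composition of Definition~\ref{def:NCbyMarg}, Definition~\ref{def:extend}, Remark~\ref{rmk:subscen}, and Theorem~\ref{thm:FAB}.
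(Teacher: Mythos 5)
Your proof is correct and matches the paper's approach: the paper gives no explicit argument beyond the phrase ``simply putting those definitions together,'' and your unfolding of Definition~\ref{def:NCbyMarg}, Definition~\ref{def:extend}, and Remark~\ref{rmk:subscen} (with Theorem~\ref{thm:FAB} to identify the two notions of noncontextuality) is exactly that intended chain of definitions, with the compatibility bookkeeping made explicit.
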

A beautiful mathematical structure appears when we build a sequence of nested scenarios, and ask about the possible extensions:
\begin{definition}\label{def:NestScen}
  A \emph{sequence of scenarios} is an ordered collection of scenarios $\DE{{\mscen{X}{C}_i}}$ where ${\mscen{X}{C}}_i \preceq {\mscen{X}{C}}_{i+1}$.
  A sequence of scenarios is \emph{complete} if in every step ${\mscen{X}{C}}_i$ is a proper subscenario of ${\mscen{X}{C}}_{i+1}$ such that there is no proper subscenario in between them.
\end{definition}
This notion of sequence of scenarios allows for growing sets of questions, but a central question comes for sequences of the form
\begin{equation}\label{eq:NestScen}
 {\mscen{X}{C}} \prec \ldots \prec {\mscen{X}{C}}_i \prec \ldots \prec {\mscen{X}{X}},
\end{equation}
\ie a strictly increasing sequence that finishes on the classical scenario for the same set of measurements. A very simple result is:
\begin{theorem}\label{thm:extensions}
 If an empirical model in $\mscen{X}{C}$ is noncontextual, then it extends to all elements of the sequence \eqref{eq:NestScen}.
\end{theorem}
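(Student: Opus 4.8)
The plan is to reduce the whole statement to the single global distribution that noncontextuality provides, and then push that distribution down onto each scenario in the chain by marginalisation. First I would invoke Theorem~\ref{thm:NCasClassicalExtension}: since the empirical model $\EM{E}$ in $\mscen{X}{C}$ is noncontextual, it extends to the classical scenario $\mscen{X}{X}$, so there is a joint distribution $p_{\mmts{X}}$ on $\prod_{\mmt{M}\in\mmts{X}}\mmt{M}$ whose marginal $p^{\mmts{X}}_{C}$ onto each $C\in\mcover{C}$ reproduces $p_C^{\EM{E}}$.

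Next I would record a structural fact about the sequence~\eqref{eq:NestScen}: every scenario ${\mscen{X}{C}}_i$ occurring in it lies between $\mscen{X}{C}$ and $\mscen{X}{X}$, i.e.\ $\mscen{X}{C}\preceq{\mscen{X}{C}}_i\preceq\mscen{X}{X}$, so by Definition~\ref{def:Subscenario} its measurement collection both contains and is contained in $\mmts{X}$, hence equals $\mmts{X}$. Equivalently, each ${\mscen{X}{C}}_i$ is a context-restricted subscenario of $\mscen{X}{X}$ (Definition~\ref{def:RestrSubscen}) whose cover $\mcover{C}_i$ satisfies $\mcover{C}\leq\mcover{C}_i\leq\mcover{X}$; in particular every maximal context $C_i\in\mcover{C}_i$ is a subset of $\mmts{X}$, so the marginal $p^{\mmts{X}}_{C_i}$ is defined.

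I would then take as candidate extension the model $\EM{E}_i$ on ${\mscen{X}{C}}_i$ defined by $p_{C_i}^{\EM{E}_i}:=p^{\mmts{X}}_{C_i}$ for each $C_i\in\mcover{C}_i$. This is a genuine empirical model, since a marginal of a probability distribution is again a probability distribution, and it is automatically non-disturbing because all of its context distributions descend from the one distribution $p_{\mmts{X}}$; consequently the marginal of $\EM{E}_i$ onto any subset of a context is well-defined, independently of which maximal context is used. To see that $\EM{E}_i$ extends $\EM{E}$ in the sense of Definition~\ref{def:extend}, take any $C'\in\mcover{C}$; since $\mscen{X}{C}\preceq{\mscen{X}{C}}_i$ there is $C_i\in\mcover{C}_i$ with $C'\subseteq C_i$, and iterating the marginalisation formula~\eqref{eq:marginal} (summing in stages over finite sums equals summing at once) gives
\begin{equation*}
 p_{C'}^{\EM{E}_i}=\big(p^{\mmts{X}}_{C_i}\big)_{C'}=p^{\mmts{X}}_{C'}=p_{C'}^{\EM{E}} ,
\end{equation*}
the last equality being the defining property of $p_{\mmts{X}}$ from the first step. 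Hence $\EM{E}$ extends to ${\mscen{X}{C}}_i$, and since $i$ is arbitrary it extends to every element of~\eqref{eq:NestScen}.

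I do not expect a genuine obstacle here: the argument is bookkeeping on top of Theorem~\ref{thm:NCasClassicalExtension}. The two points that do require care are (i) checking that every intermediate scenario carries the full measurement collection $\mmts{X}$, which is exactly what makes ``marginalise $p_{\mmts{X}}$ onto $C_i$'' meaningful, and (ii) the iterated-marginalisation step together with the non-disturbance of $\EM{E}_i$, which guarantees that restricting $\EM{E}_i$ back down to $\mscen{X}{C}$ is unambiguous and matches $\EM{E}$ on all of $\mcover{C}$ at once. One could instead argue by induction along a complete sequence, extending one step at a time, but the direct ``restrict the global model'' route avoids repeating the argument and makes transparent that it is the noncontextuality of $\EM{E}$, not any feature of the particular chain, that does all the work.
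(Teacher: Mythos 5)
Your proof is correct and is exactly the argument the paper intends: Theorem~\ref{thm:extensions} is stated there without an explicit proof (as ``a very simple result''), and the implied reasoning is precisely yours --- obtain the global joint distribution from Theorem~\ref{thm:NCasClassicalExtension} and push it down by marginalisation onto each intermediate scenario, using that every scenario sandwiched between $\mscen{X}{C}$ and $\mscen{X}{X}$ carries the full measurement collection $\mmts{X}$. Your two points of care (the sandwich argument for the measurement set, and the consistency of iterated marginalisation) are the right ones and are handled correctly.
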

The beautiful question comes: if an empirical model in  $\mscen{X}{C}$ is contextual, which extensions are allowed in a sequence  \eqref{eq:NestScen} and which are not?
In other words: at which steps contextuality really makes its presence?
We close this section with a short example.
\begin{example}\label{ex:SubscenCycle}
 Let us take the $5$-cycle as the starting scenario: $\mmts{X} = \DE{\mmt{X}_i}_{i=1\ldots 5}$, $\mcover{C} =  \DE{\DE{\mmt{X}_i, \mmt{X}_{i+1}}}_{i=1\ldots 5}$.
 This scenario supports a bundle like the one in the Hollow Triangle example, \ref{ex:SParable}.
 We can consider the coarser scenario with the same measurements and $\mcover{C} =  \DE{\DE{\mmt{X}_{i-1},\mmt{X}_i, \mmt{X}_{i+1}}}_{i=1\ldots 5}$ and we see that the model extends to it. It also extends to the scenario given by $\mcover{C} =  \DE{\DE{\mmt{X}_{i-1},\mmt{X}_i, \mmt{X}_{i+1},\mmt{X}_{i+2}}}_{i=1\ldots 5}$.
 The proof of theorem \ref{thm:extensions} can be applied to other extensions, like here, to say that  the empirical model extends to  any scenario in between the cycle and this four-element-context cover.
 Since no proper subscenario fits in between this last one and the classical, we can say that it is this step that allows for contextuality.
\end{example}
 Other interesting consequences of the notion of subscenarios will be presented elsewhere\cite{NonMonogamy}. 

\section{Connection to other approaches and previous literature}\label{sec:Others}
Naturally, the here proposed approach to contextuality is not isolated from other proposals.
The starting point of describing each measurement in each of its contexts has some similarity with \emph{Contextuality by Default} (CbD) \cite{CbD}.
The difference appears when we demand the marginal condition \ref{cond:MC} to hold, while CbD relax it, in order to be able of treating empirical data which usually do not obey such condition strictly.
Probably the notion of \emph{extended contextuality}\cite{ExtCont} be the link in between the two approaches, but some interesting notions can come up from the difference among original contexts and extended ones.

The central \textit{r\^ole} played by topology is also present in the (Pre-)Sheaf approach\cite{AB}, to which we have already made reference.
The notion of extendability and its links to topology had been discussed previously\cite{RS} and we should always mention that the idea of gluing probability spaces and ask for its (non-)triviality was already present in ref.~\cite{Vorob}.
 
All those contemporary notions have some debt with the \emph{graph approach to contextuality}\cite{CSW} and to its variations\cite{ATCspringer}.
\section{Discussions and Future Developments}\label{sec:Disc}
Many points of this approach must be developed elsewhere and also many questions are still open.

One very natural thing is to consider \emph{continuous measurement scenarios}.
A natural example comes from spin (or polarisation) measurements on a qbit. 
There, the Bloch (or Poincar\'e) sphere serve as a parameter space for the measurements of the scenario; however, the measurement cover  would be made of singletons with no transition function defined.
Clearly, we need a good definition of a \emph{connection}, in order to associate elements of ``neighbour'' fibres, even when they do not belong to the same context.
A very good geometrical question is: after defining such a connection, how its curvature relates to contextuality?

A topologically interesting question comes to higher order homology.
The first homology group is deeply connected with the existence of  nontrivial cycles\footnote{By construction, cycles which are not boundaries are usually understood as folding some ``hole'', from where the nontriviality comes.}.
And nontrivial first homology groups is sufficient for the existence of nontrivial probability bundles which realise contextual empirical models, including examples coming from quantum theory.
Is it possible to have a measurement scenario with trivial first homology group, but nontrivial higher order homology groups, like a sphere, which also support nontrivial probability bundles?
Moreover, are those contextual models obtainable from quantum theory?
Or is it true that quantum contextuality really depends on the non-triviality of the first homology group\cite{Caru17}?


The notion of subscenarios brings with it the idea of covers of a given bundle.
Example \ref{ex:SubscenCycle} has discussed a collection of different subscenarios, but all with the same topology, except by the last one, where contextuality shows up.
In this sense we identify that ``closing the holes'' is the essential step in order to reduce the possibilities of contextuality.
The universal cover of a probability bundle will be deeply related to the notion of extended contextuality and this also seems to deserve more studies.

Another empty avenue to be travelled comes from the geometry of the simplest example: the probability simplex of subsec.~\ref{sec:ClassProb}.\ref{sub:ProbSimplex}.
We see that the marginal condition \ref{cond:MC} implies some interesting identifications.
Moreover, whenever nontrivial probability bundles appear, the global picture becomes richer and richer.
To have a good description of some examples is a nice target for the near future.

\section{Conclusion}\label{sec:Conc}
In this paper we introduced a fibre bundle approach to contextuality.
Actually, three kinds of bundles play interesting \textit{r\^oles} in this approach: measurable bundles, probability bundles, and sample bundles.
Mathematically, this proposal represents a merging of topological notions with probability theory.
From the viewpoint of contextuality, it is another approach, with many similarities with the (pre-)sheaf approach, but possibly more comprehensible to many physicists.
Naturally, another approach can allow for new interpretations and new insights. 
Some interesting questions were raised in this pages and we believe that their answers will help us figuring out many interesting aspects of contextuality.

\vskip6pt
\enlargethispage{20pt}




\competing{The author declares that he has no competing interests.}

\funding{This work received support from the Brazilian agency CNPq, also from the Brazilian National Institute for Science and Technology on Quantum Information, and finally from Purdue Winer Memorial Foundation.}

\ack{This work also took benefit from collaborations and discussions with many colleagues. I am specially indebted with Samson Abramsky, Barbara Amaral, Ad\'an Cabello, Ehtibar Dhzafarov, Leonardo Guerini, Shane Mansfield,  Ricardo Mosna, Roberto Imbuzeiro Oliveira, Paulo Ruffino, and Rui Soares Barbosa for different reasons. 
	J.S. Bach has probably helped with the quality of the text. 
	Unfortunately, he could not correct my own mistakes.}

\disclaimer{Important to say that the connection between Escher paintings and contextuality has long been used by Oxfordians like Shane Mansfield and Samson Abramsky, even during a time when I believed to have been the first to make such connection.}



\begin{thebibliography}{29}

 \bibitem{Escher} Escher, MC. 1960
		\href{https://www.mcescher.com/gallery/impossible-constructions/}{https://www.mcescher.com}.

 \bibitem{Topology} Armstrong, MA. 1983
		\textit{Basic Topology}.
		\href{https://doi.org/10.1007/978-1-4757-1793-8}{UTM, Springer}.

 \bibitem{Arnold} Arnold, VI. 1989
		\textit{Mathematical Methods of Classical Mechanics},
		\href{https://doi.org/10.1007/978-1-4757-1693-1}{GTM, Springer}.

 \bibitem{Penrose} Penrose LS, Penrose R 1958
		Impossible Objects: A Special Type of Visual Illusion.
		\href{https://doi.org/10.1111/j.2044-8295.1958.tb00634.x}{\textit{British Journal of Psychology} \textbf{49}} 31.

 \bibitem{KS}  Kochen S.,  Specker EP. 1967
		 The problem of hidden variables in quantum mechanics.
 		\href{https://doi.org/10.1512/iumj.1968.17.17004}{\textit{J. Math. Mech.} \textbf{17}} 59.


 \bibitem{TaoProb} Tao T. 2011
		\textit{An Introduction to Measure Theory}. 
		\href{https://bookstore.ams.org/gsm-126}{GSM, AMS}.

 \bibitem{EPR} Einstein A, Podolsky B,  Rosen N. 1935
 		Can Quantum-Mechanical Description of Physical Reality Be Considered Complete?
		\href{https://doi.org/10.1103/PhysRev.47.777}{\textit{Phys. Rev.} \textbf{47}} 777.

 \bibitem{Bell}  Bell JS. 1966
		On the problem of hidden variables in quantum mechanics.
 		\href{https://doi.org/10.1103/RevModPhys.38.447}{\textit{Rev. Mod. Phys.} \textbf{38}} 447.



 \bibitem{Budronietal} Budroni C, Morchio G. 2012
 		Bell Inequalities as Constraints on Unmeasurable Correlations.
		\href{https://doi.org/10.1007/s10701-012-9625-0}{\textit{Found. Phys.} \textbf{42}} 544.
 
 \bibitem{AB} Abramsky S, Brandenburger A. 2011 
		The sheaf-theoretic structure of non-locality and contextuality. 
		\href{https://doi.org/10.1088/1367-2630/13/11/113036}{\textit{New J. Phys.} \textbf{13}} 113036. 

%
%
%
%
 \bibitem{Fine} Fine A. 1982
 		Hidden Variables, Joint Probability, and the Bell Inequalities.
		\href{https://doi.org/10.1103/PhysRevLett.48.291}{\textit{Phys. Rev. Lett.} \textbf{48}} 291.
 
 \bibitem{Pearl} Pearl J. 2010
 		An Introduction to Causal Inference.
		\href{https://doi.org/10.2202/1557-4679.1203}{\textit{The International Journal of Biostatistics} \textbf{6}} 7.
 
 \bibitem{RSS} Abramsky S, Mansfield S, {Soares Barbosa} R. 2012
 		The Cohomology of Non-Locality and Contextuality.
		\href{https://doi.org/10.4204/EPTCS.95.1}{\textit{EPTCS} \textbf{95}} 1.
		
 \bibitem{ASBKLM} Abramsky S, Soares Barbosa R,  Kishida K, Lal R, Mansfield S. 2015
		{Contextuality, Cohomology and Paradox}.
		\href{https://doi.org/10.4230/LIPIcs.CSL.2015.211}{{24th EACSL Annual Conference on Computer Science Logic (CSL 2015)},
  p.~211}.
  
 \bibitem{OTR} Okay C, Tyhurst E, Raussendorf R. 2018
 		{The cohomological and the resource-theoretic perspective on quantum contextuality: common ground through the contextual fraction}.
		\href{https://arxiv.org/abs/1806.04657}{arXiv:1806.04657}.

 \bibitem{Caru18} Car\`u G. 2018
 		{Towards a complete cohomology invariant for non-locality and contextuality}.
		\href{https://arxiv.org/abs/1807.04203}{arXiv:1807.04203}.
 
 \bibitem{n-cycle} Ara\'ujo M, Quintino MT, Budroni C, {Terra Cunha} M, Cabello A. 2013
 		All noncontextuality inequalities for the $n$-cycle scenario.
		\href{https://doi.org/10.1103/PhysRevA.88.022118}{\textit{Phys. Rev. A} \textbf{88}} 022118.
 
 \bibitem{NonMonogamy}Temistocles T, Rabelo R,  {Terra Cunha} M. in preparation.
 
 \bibitem{CbD} Dzhafarov EN, Kujala JV, Cervantes VH. 2016
 		Contextuality-by-Default: A Brief Overview of Ideas, Concepts, and Terminology
 		\href{https://doi.org/10.1007/978-3-319-28675-4_2}{\textit{Lecture Notes in Computer Science} \textbf{9535}} 12.
 
 \bibitem{ExtCont} Kujala JV, Dzhafarov EN,  Larsson J-\o{A}. 2015
 		Necessary and Sufficient Conditions for an Extended Noncontextuality in a Broad Class of Quantum Mechanical Systems.
		\href{https://doi.org/10.1103/PhysRevLett.115.150401}{\textit{Phys. Rev. Lett.} \textbf{115}} 150401.
 
 \bibitem{RS} Mansfield S, Soares Barbosa R. 2014.
 		Extendability in the Sheaf-theoretic Approach: Construction of Bell Models from Kochen-Specker Models.
		\href{https://arxiv.org/abs/1402.4827}{arXiv:1402.4827}.
 
 \bibitem{Vorob} Vorobev NN 1962,
 		Consistent Families of Measures and their Extensions.
		\href{https://doi.org/10.1137/1107014}{\textit{Theory Probab. Appl.} \textbf{7}} 147.
 
 \bibitem{CSW}  Cabello A, Severini S,  Winter A. 2010,
		 (Non-)Contextuality of physical theories as an axiom.
		 \href{http://arxiv.org/abs/1010.2163}{\textit{arXiv:1010.2163}}. See also
		 Cabello A, Severini S,  Winter A. 2014,
		  Graph-theoretic approach to quantum correlations.
		 \href{http://dx.doi.org/10.1103/PhysRevLett.112.040401}{\textit{Phys. Rev. Lett.} \textbf{112}} 040401.

 \bibitem{ATCspringer} Amaral B, {Terra Cunha} M 2018
 		\textit{On Graph Approaches to Contextuality and their Role in Quantum Theory}.
		\href{http://dx.doi.org/10.1007/978-3-319-93827-1}{SpringerBriefs in Mathematics, Springer}.
		
\bibitem{Caru17} Car\`u G. 2017
 		{On the Cohomology of Contextuality}.
		\href{https://doi.org/10.4204/EPTCS.236.2}{\textit{EPTCS} \textbf{236}} 21.

\end{thebibliography}
\end{document}